\tikzset{cdarrow/.style={auto,
    execute at begin node=$\scriptstyle,execute at end node=$}}
\newread\testin
\newcommand{\RR}{\mathbb R}
\newcommand{\EE}{\mathbb E}
\newcommand{\abs}[1]{{\lvert #1 \rvert}}
\theoremstyle{plain}
\numberwithin{equation}{section}
\newtheorem{theorem}{Theorem}
\newtheorem{otheorem}[equation]{Theorem}
\newtheorem{lemma}[equation]{Lemma}
\newtheorem{corollary}[equation]{Corollary}
\theoremstyle{definition}
\newtheorem{definition}[equation]{Definition}
\theoremstyle{remark}
\newtheorem{remark}[equation]{Remark}
\newcommand{\Edges}{{\mathcal E}}
\newcommand{\Verts}{{\mathcal V}}
\begin{document}
\title{Measurement Isomorphism of Graphs}

\author[Gortler]{Steven J. Gortler}
\author[Thurston]{Dylan P. Thurston}

\begin{abstract}
The d-measurement set of a graph is its set of
possible squared edge lengths over all d-dimensional
embeddings. 
In this note, we define a new notion of graph isomorphism called
\emph{d-measurement isomorphism}. Two graphs are d-measurement isomorphic
if there is  agreement in their d-measurement sets.
A natural question to ask is ``what can be said about two graphs that
are d-measurement isomorphic?''
In this note, we show that this property coincides with the
\emph{$2$-isomorphism} property studied by Whitney.

\end{abstract}

\date{\today}


\maketitle


\section{Introduction}

Given a graph $\Gamma$ we can consider placing each vertex at some
position in $\EE^d$ and then measuring the squared Euclidean 
length of each of the
graph's edges. This gives us the coordinates of a single 
``measurement point'' in 
$\RR^e$, where $e$ is the number of edges in the graph. As we alter
the vertex positions, the measurement point will typically change. The
d-dimensional measurement set, $M_d(\Gamma)$,
is the union of all achievable measurement points 
as we vary over all possible placements of the vertices in $\EE^d$.
Suppose that, 
after some  permutation of the $e$ coordinate axes, 
we have agreement in the d-dimensional measurement sets of 
two graphs,
$\Gamma$ and $\Delta$.
Then we say 
that the graphs $\Gamma$ and $\Delta$ are d-measurement isomorphic.

Clearly, two isomorphic graphs must be d-measurement isomorphic.
But the converse is not true.
For example, 
the measurement set of \emph{any} forest graph is the entire first octant of 
$\RR^e$ as there are no constraints on the achievable edge lengths.
A natural question to ask is ``what can be said about two graphs that
are d-measurement isomorphic?''
In this note, 
we relate this type
of isomorphism to a graph property 
studied by Whitney~\cite{whit}
called $2$-isomorphism.
Our main result is that 
for any $d$, two graphs are d-measurement isomorphic
if and only if they are 2-isomorphic. In particular, 
for 3-connected graphs, this means that two graphs 
are d-measurement isomorphic
if and only if they  are isomorphic graphs.

\label{sec:definitions}
\begin{definition}
  A \emph{graph}~$\Gamma$ is a set of $v$ vertices $\Verts(\Gamma)$
  and $e$ edges~$\Edges(\Gamma)$, where $\Edges(\Gamma)$ is a set of\
  two\hyp element subsets of $\Verts(\Gamma)$.  
\end{definition}

\begin{definition}
Two graphs $\Gamma$ and $\Delta$, are \emph{isomorphic}
if there is a bijection $\varrho$ between $\Verts(\Gamma)$ and $\Verts(\Delta)$
such that $\{x,y\}\in \Edges(\Gamma)$ iff
$\{\varrho(x),\varrho(y)\}\in \Edges(\Delta)$.
\end{definition}

Next we define two  weaker notions of graph isomorphism. These
allow us to move around barely connected parts of the graph without
changing the equivalence class.

\begin{figure}
\begin{center} \begin{tabular}{cc}
  {  \includegraphics[scale=.18]{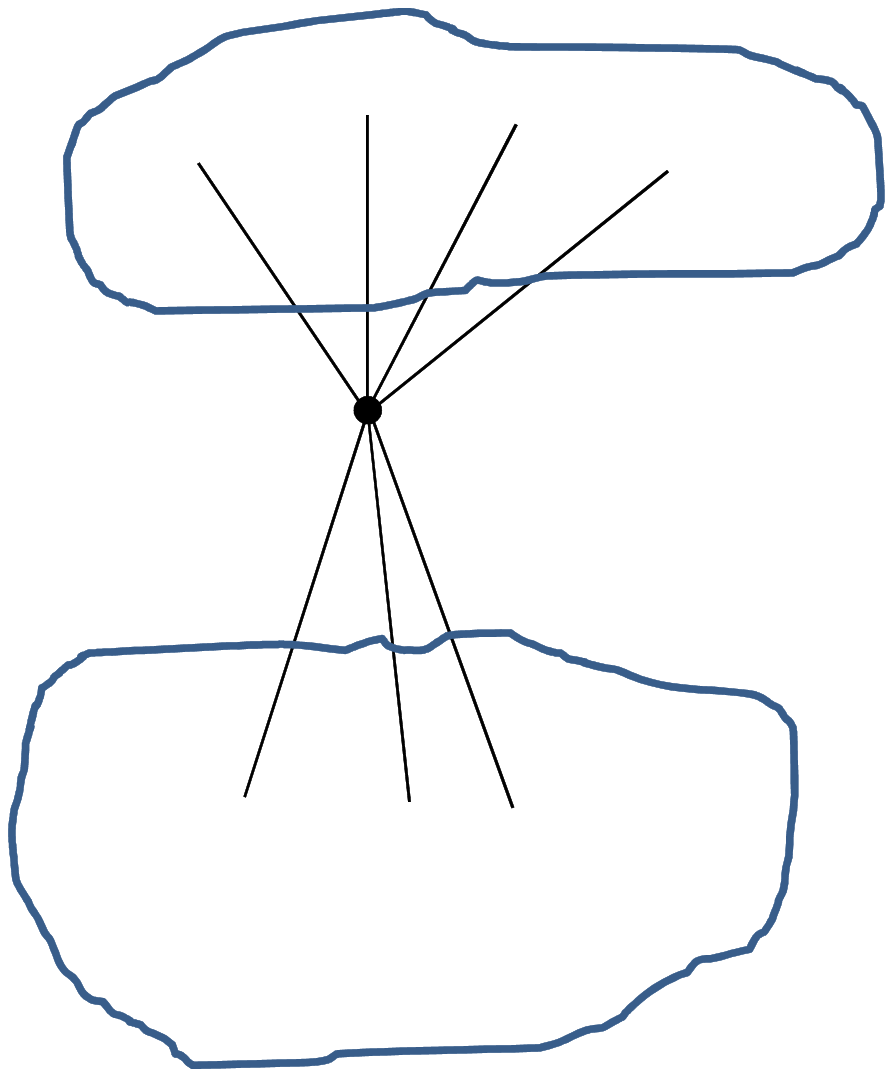}}&
  {\includegraphics[scale=.18]{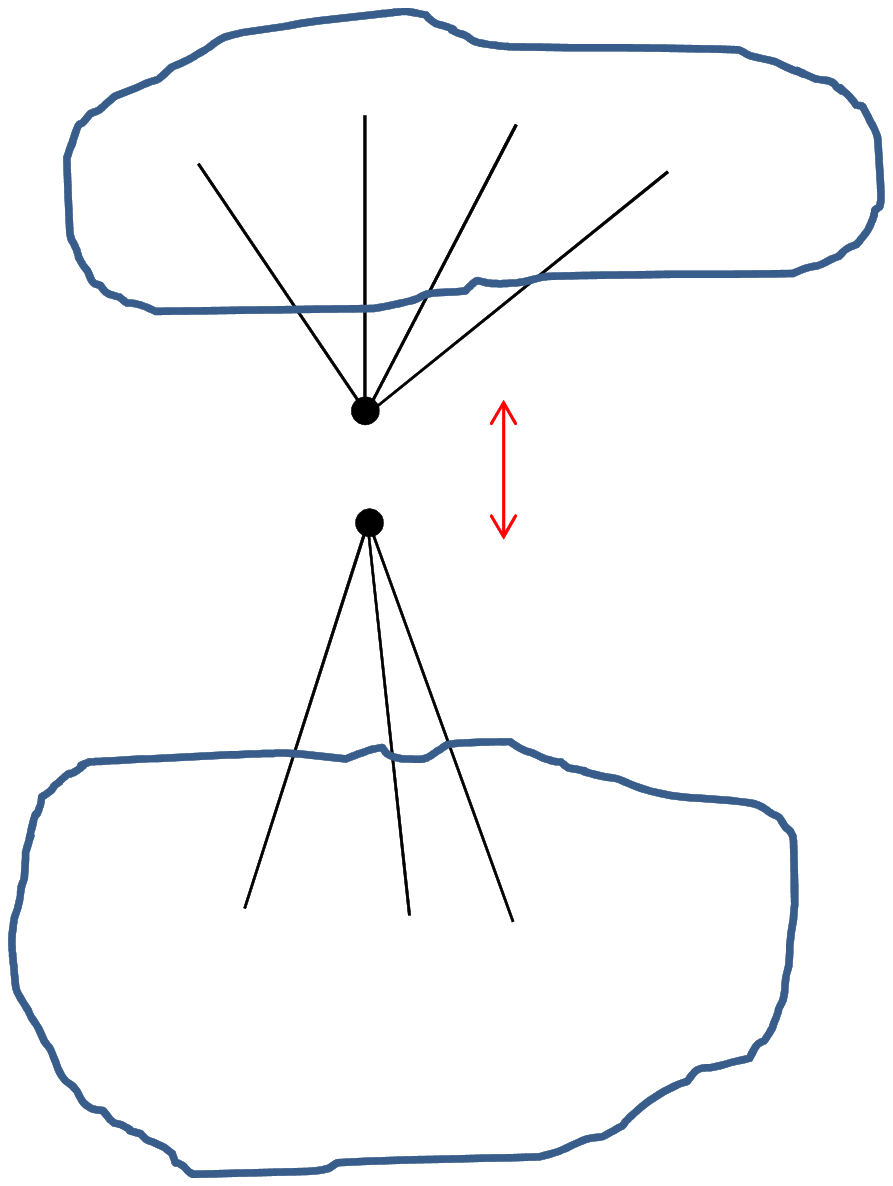}}\\
  (a) & (b) \\
\end{tabular} \end{center}
  \caption{The split operation. The graphs (a) and (b) are 1-isomorphic.}
  \label{fig:split}
\end{figure}

\begin{definition}
A \emph{cut vertex} of  graph is a vertex whose removal disconnects the graph.
A \emph{split} operation breaks a cut vertex into two vertices to produce two
disjoint subgraphs. Two graphs are \emph{1-isomorphic} if they become 
isomorphic under a finite sequence of split operations. 
See Figure~\ref{fig:split}.
\end{definition}

\begin{figure}
\begin{center} \begin{tabular}{cccc}
  {  \includegraphics[scale=.18]{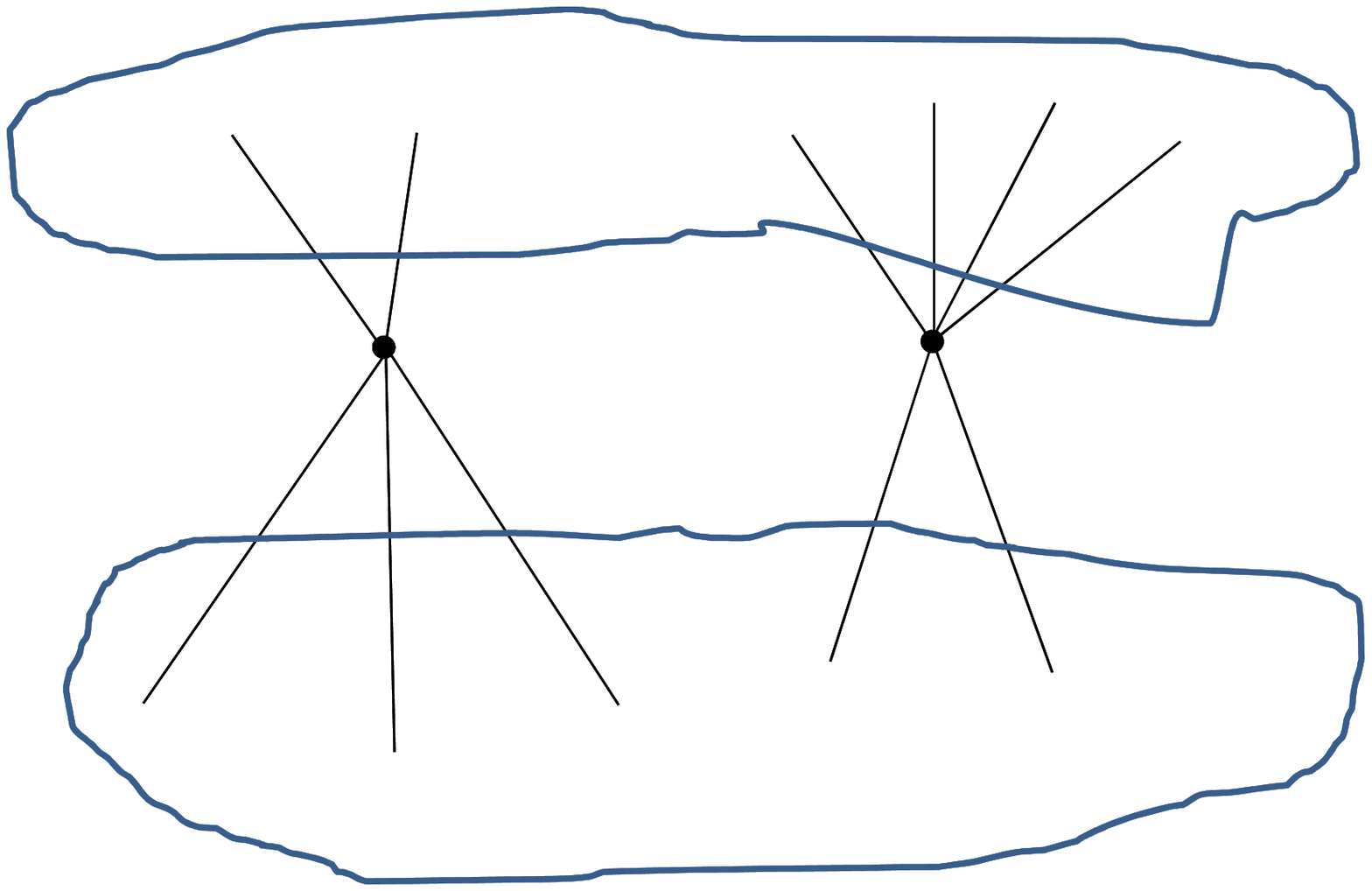}}&
  {  \includegraphics[scale=.18]{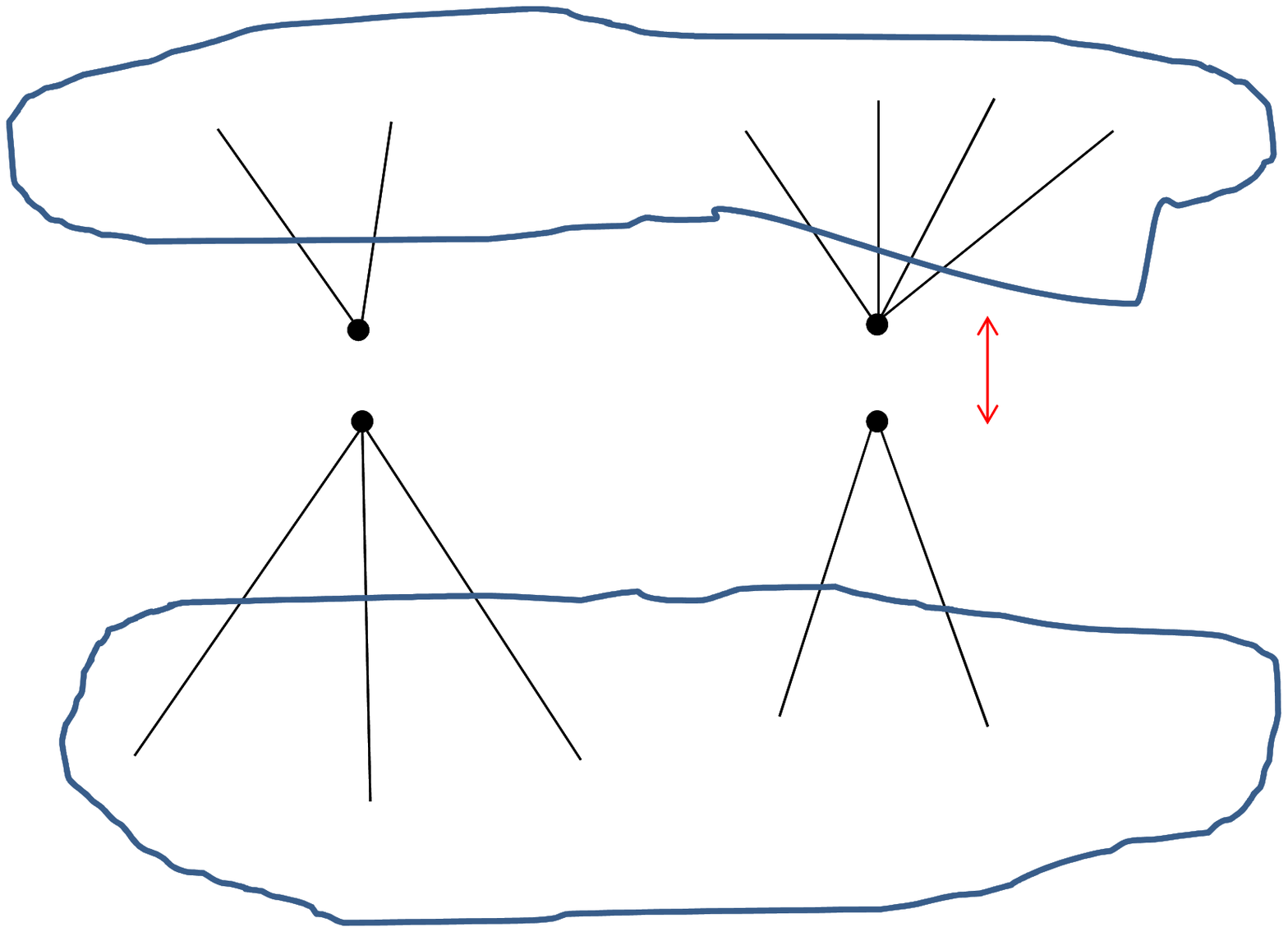}}&
  {  \includegraphics[scale=.18]{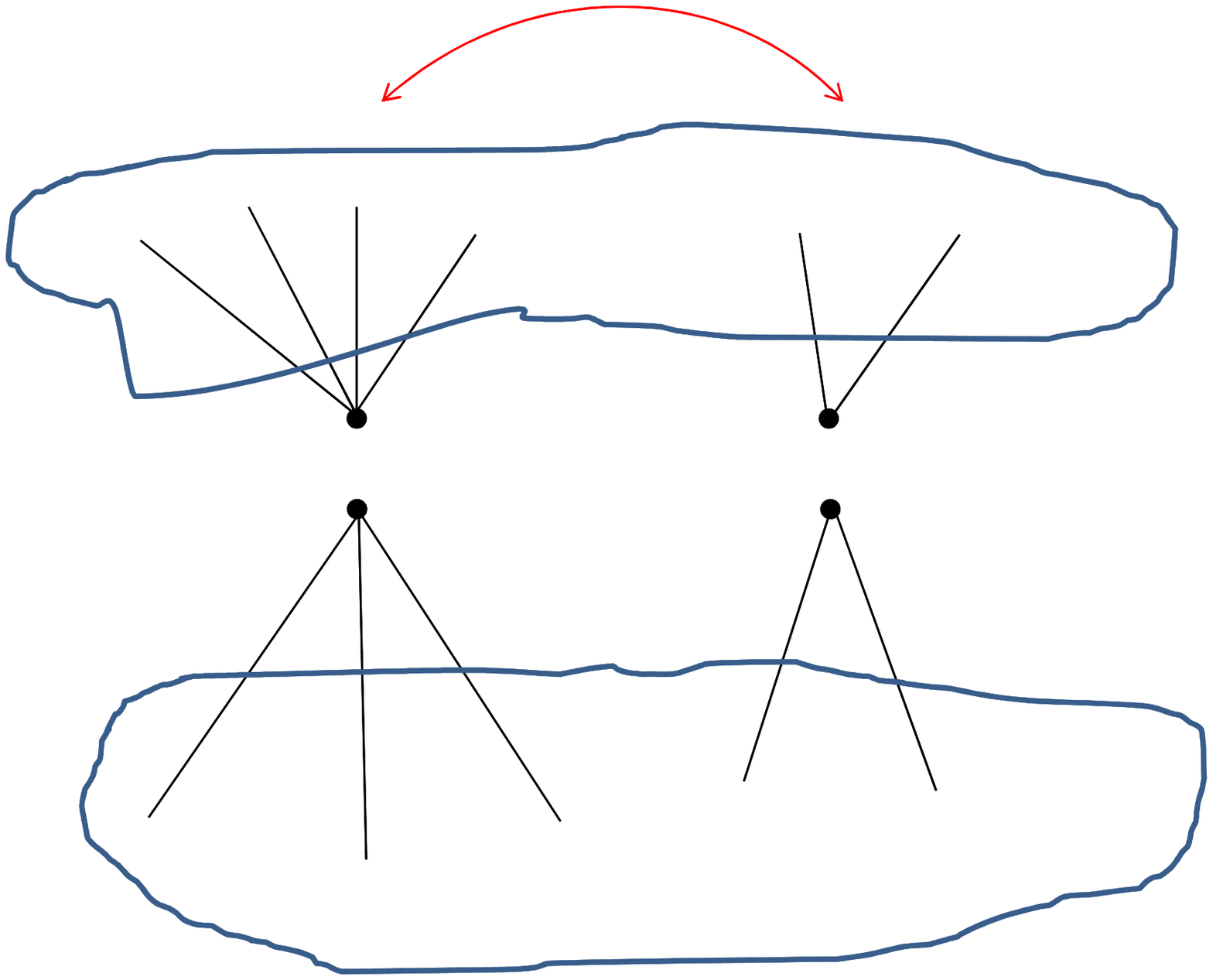}}&
  {  \includegraphics[scale=.18]{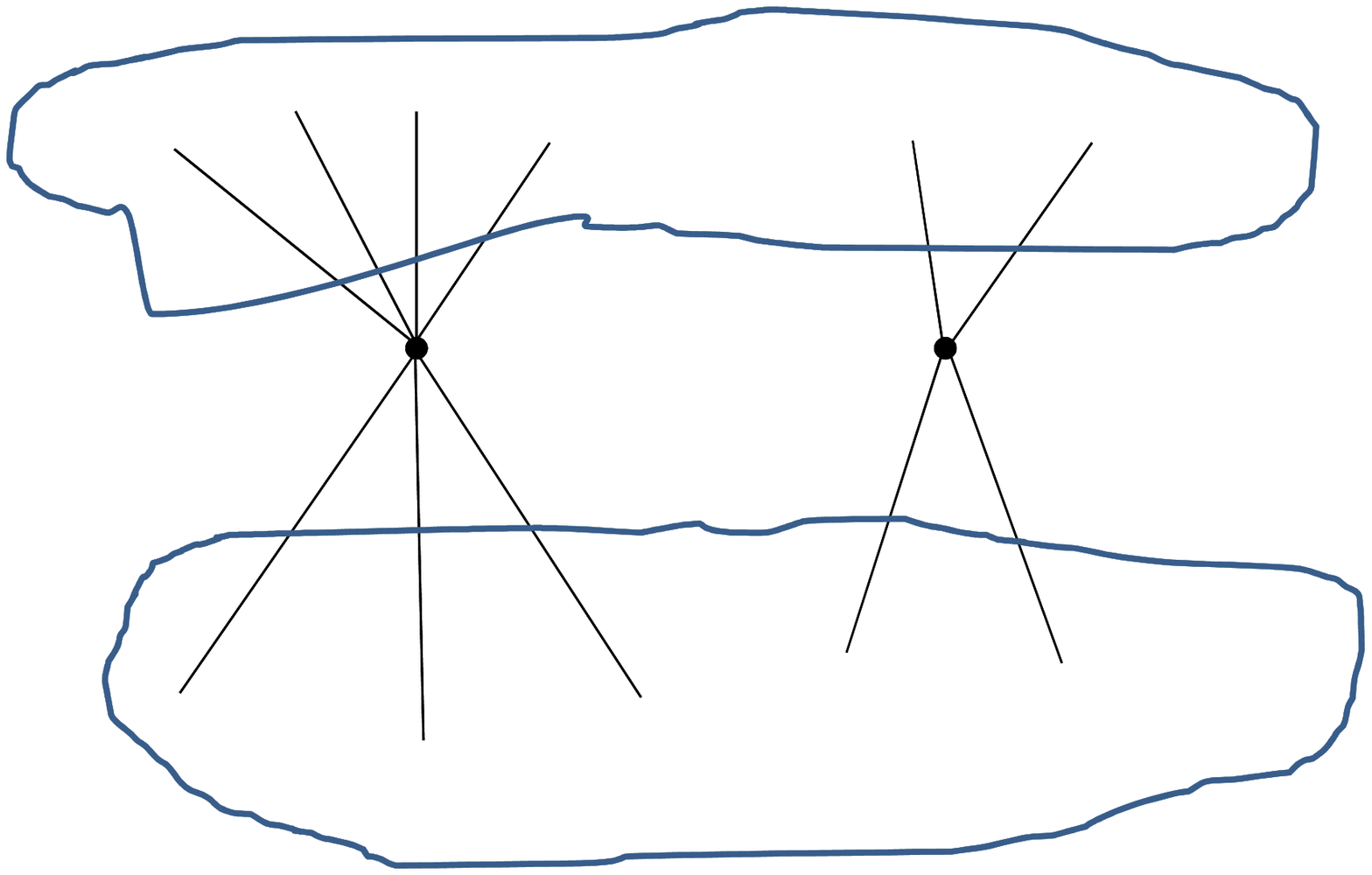}}\\
  (a) & (b) &(c)&(d) \\
\end{tabular} \end{center}
  \caption{The reversal operation. The graphs, (a) and (d) are 2-isomorphic.
Note that the edge lengths of the frameworks are unchanged.}
  \label{fig:reverse}
\end{figure}

\begin{definition}(Following~\cite{wag})
If $S \subset \Edges(\Gamma)$
then let $\Gamma[S]$ denotes the subgraph induced by $S$.
A partition $\{S,T\}$ of $\Edges(\Gamma)$ is a \emph{2-separation} of $\Gamma$ if
$|S|\geq 2\leq |T|$ and 
$|\Verts(\Gamma[S]) 
\cap
\Verts(\Gamma[T])| = 2$. 
Let $\{S,T\}$ be a 2-separation of $\Gamma$ and let
the \emph{cut pair}
$\Verts(\Gamma[S]) 
\cap
\Verts(\Gamma[T])$ be $\{x,y\}$. Let $\Gamma'$ be the graph obtained from $\Gamma$ by
interchanging in $\Gamma[S]$ the incidences of the edges
at $x$ and $y$. Then we say that $\Gamma'$ is obtained from $\Gamma$ by a 
\emph{reversal} operation.  
Two graphs are \emph{2-isomorphic} if they become 1-isomorphic after a finite 
sequence of reversals. See Figure~\ref{fig:reverse}.
\end{definition}
Note that is not the same notion of $2$-isomorphism studied in~\cite{yao}.

\begin{remark}
Since 3-connected graphs have no 2-separations, for such graphs
2-isomorphism coincides with graph isomorphism.
\end{remark}

Next we define another notion of graph equivalence.

\begin{definition}
A \emph{cycle} is a path of adjacent 
vertices that starts and ends at the same vertex,
and with no  vertex repeated in the path.
Two graphs $\Gamma$ and $\Delta$, 
are  \emph{cycle isomorphic}
if there is 
bijection $\sigma$ between $\Edges(\Gamma)$ and
 $\Edges(\Delta)$ 
such that  for any $S \subset \Edges(\Gamma)$, 
$\Gamma[S]$ is a cycle iff $\Delta[\sigma(S)]$ is a cycle.
\end{definition}

In~\cite{whit}, Whitney proved the following theorem 
that will provide all of the heavy lifting that we will need in
this note.
\begin{otheorem}[Whitney]
\label{thm:whit}
Two graphs are cycle-isomorphic iff they are 2-isomorphic.
\end{otheorem}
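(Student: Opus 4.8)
The plan is to prove the two implications separately; the direction ``$2$-isomorphic $\Rightarrow$ cycle-isomorphic'' is routine, while the converse carries essentially all the content and proceeds by induction on the number of edges $e$, reducing successively to the $2$-connected and then the $3$-connected case. Throughout I view a cycle isomorphism as an isomorphism of cycle (graphic) matroids, since the cycles are exactly the circuits. For the forward direction, note first that a graph isomorphism induces a cycle isomorphism via its action on edges, and that cycle isomorphism is an equivalence relation; so it is enough to check that one split and one reversal each preserve cycles, taking $\sigma$ to be the identity on the unchanged edge set. A cycle is $2$-regular and connected, hence cannot traverse a cut vertex $w$ from one side to the other (it would have to return through $w$, repeating a vertex); thus both of its edges at $w$ stay on the same copy after a split, and cycles are unchanged. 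For a reversal along a $2$-separation $\{S,T\}$ with cut pair $\{x,y\}$, the operation interchanges $x$ and $y$ throughout $\Gamma[S]$: a cycle inside $\Gamma[S]$ maps to a cycle, a cycle inside $\Gamma[T]$ is untouched, and a crossing cycle splits as a path from $x$ to $y$ in $S$ joined to a path from $x$ to $y$ in $T$; swapping $x,y$ on the $S$-path fixes its endpoint set $\{x,y\}$, so the union remains a cycle.

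For the converse, let $\sigma\colon\Edges(\Gamma)\to\Edges(\Delta)$ be a cycle isomorphism. Two edges lie on a common cycle iff they lie in a common block (a maximal subgraph with no cut vertex), and $\sigma$ preserves this relation, so it carries blocks of $\Gamma$ to blocks of $\Delta$ and bridges to bridges. Splitting every cut vertex turns each graph into the disjoint union of its blocks, and since splits generate $1$-isomorphism it suffices to prove each block of $\Gamma$ is $2$-isomorphic to its $\sigma$-image and then reassemble. This reduces the problem to the case in which $\Gamma$ and $\Delta$ are $2$-connected.

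Assume now that $\Gamma$ is $2$-connected. If it is in fact $3$-connected it has no $2$-separation, and I would invoke the classical fact (also due to Whitney) that a $3$-connected graph is reconstructible from its cycle matroid: its vertices are recovered from the bonds, so $\sigma$ is induced by a graph isomorphism and $\Gamma\cong\Delta$. Otherwise fix a graph $2$-separation $\{S,T\}$ with cut pair $\{x,y\}$; since a matroid $2$-separation is detected by the cycle structure alone, $\{\sigma(S),\sigma(T)\}$ is a $2$-separation of $\Delta$. Adjoin a virtual edge joining $x$ and $y$ to each of $\Gamma[S]$ and $\Gamma[T]$, and likewise in $\Delta$; each resulting piece has strictly fewer than $e$ edges, and $\sigma$, extended to match the two virtual edges, is a cycle isomorphism between corresponding pieces. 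By the inductive hypothesis each pair of pieces is $2$-isomorphic, and regluing along the cut pair recovers $\Gamma$ and $\Delta$ up to the single binary choice of how the $S$-piece attaches at $\{x,y\}$ --- a choice realized exactly by a reversal. Composing the piecewise $2$-isomorphisms with at most one reversal closes the induction.

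The hard part will be the $2$-connected, non-$3$-connected step. One must verify that a $2$-separation of the cycle matroid genuinely comes from a graph $2$-separation with an honest cut pair, choose the virtual marker edges so that the restricted $\sigma$ is a well-defined cycle isomorphism on each piece (matching a path from $x$ to $y$ together with the marker in one graph to the analogous object in the other), and confirm that reassembly introduces no ambiguity beyond a single reversal. The $3$-connected base case --- that the cycle matroid determines a $3$-connected graph up to isomorphism --- is itself a nontrivial classical result, which I would either cite or establish through the identification of bonds with the cocircuits of the matroid.
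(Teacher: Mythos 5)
The paper does not prove this statement at all: it is quoted verbatim from Whitney and used as a black box (``the heavy lifting''), so there is no internal proof to compare yours against. What you have written is a roadmap of the standard modern proof of Whitney's $2$-isomorphism theorem --- reduce to blocks via splits, then induct on a Tutte-style decomposition along $2$-separations with virtual marker edges, with the $3$-connected reconstruction theorem as the base case. The easy direction (splits and reversals preserve cycles) is complete and correct as you state it; note that this direction is essentially the same reflection/relabelling observation the paper itself uses when showing $2$-isomorphism implies $d$-measurement isomorphism.

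For the converse, however, your proposal is an outline rather than a proof, and the two places you flag as ``the hard part'' are not peripheral details --- they \emph{are} the theorem. The base case, that a $3$-connected graph is determined up to isomorphism by its cycle matroid, is a separate classical result of Whitney of comparable depth to the statement being proved; citing it is legitimate but means your argument is not self-contained. In the inductive step you still owe three nontrivial verifications: (i) that a vertex $2$-separation of a $2$-connected $\Gamma$ is transported by $\sigma$ to a vertex $2$-separation of $\Delta$ (this passes through the equivalence of graph and matroid $2$-separations for $2$-connected graphs); (ii) that $\sigma$, extended over the virtual edges, restricts to a genuine cycle isomorphism of each piece --- concretely, that $\sigma$ carries $x$--$y$ paths in $\Gamma[S]$ to paths between the two cut vertices of $\Delta[\sigma(S)]$, which needs an argument from the circuit structure, not just a definition; and (iii) that a $2$-isomorphism of a piece-with-virtual-edge can be realized by splits and reversals of the ambient glued graph, and that reassembly costs at most one additional reversal. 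All three can be done (this is essentially Truemper's short proof), but until they are written out you have a correct strategy with acknowledged gaps, not a proof.
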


We now define some notions related to graph embeddings.

\begin{definition}
  A \emph{configuration}~$p$  of a vertex set $\Verts$ is 
  a mapping from $\Verts$ to~$\EE^v$.
Let $C^d(\Verts)$
be the space of configurations of $\Verts$ in $\EE^d$.
For $p\in C(\Verts)$ and $u
  \in \Verts$, 
let $p(u)$ denote the image of $u$ under~$p$.  
A \emph{framework} $(p,\Gamma)$ is the pair of a graph and a configuration
of its vertices.
For a given graph~$\Gamma$
  the \emph{length-squared function}
$\ell_\Gamma$ 
is the function assigning to each
  edge of $\Gamma$ its squared edge length in the framework.  That is,
  the component of $\ell_\Gamma(p)$ in the direction of an
  edge $\{u,w\}$ is $\abs{p(u)-p(w)}^2$.
  Once we fix an 
(arbitrarily) 
identification of each 
edge in $\Edges(\Gamma)$
with an associated coordinate axis in $\RR^e$, 
we can interpret the length-squared  function as being of the type:
  $\ell_\Gamma:C^d(\Verts)\rightarrow\RR^e$.

\end{definition}

\begin{definition}
  The $d$-dimensional \emph{measurement set}~$M_d(\Gamma)$ 
  of a graph~$\Gamma$ is defined to be the image in $\RR^e$ of
  $C^d(\Verts)$ under the map $\ell_\Gamma$.  These are nested by
  $M_d(\Gamma) \subset M_{d+1}(\Gamma)$ and eventually stabilize by
  $M_{v-1}(\Gamma)$.
\end{definition}

In our context of measurement sets of graphs, we  define a new
notion of isomorphism.

\begin{definition}
Two graphs, $\Gamma$ and $\Delta$, both with $e$ edges, 
are  \emph{d-measurement isomorphic}
if there is an 
identification of each 
edge in $\Edges(\Gamma)$
with an associated coordinate axis in $\RR^e$, 
and an
identification of each 
edge in $\Edges(\Delta)$
with an associated coordinate axis in $\RR^e$, 
under which
$M_d(\Gamma)=M_d(\Delta)$
\end{definition}

Our main result is the following
\begin{theorem}
For any $d$, 
two graphs are d-measurement isomorphic iff they are 2-isomorphic.
\end{theorem}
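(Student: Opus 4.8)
The plan is to prove the theorem in two directions, using Whitney's theorem (Theorem~\ref{thm:whit}) to reduce 2-isomorphism to cycle-isomorphism, which is the combinatorial invariant that I expect to interface most directly with the algebraic structure of the measurement set. So the real content is to show that the measurement set $M_d(\Gamma)$ determines, and is determined by, the cycle structure of $\Gamma$ (together with the matching of edges to coordinate axes).

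\medskip

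First I would establish the easy direction, that 2-isomorphic graphs are $d$-measurement isomorphic. Since 2-isomorphism is generated by split and reversal operations, it suffices to check that each such operation preserves the measurement set under the natural edge-to-axis identification. For a split at a cut vertex, the two resulting components can be placed independently in $\EE^d$, so the edge lengths realizable after the split are exactly those realizable before, giving $M_d(\Gamma)=M_d(\Gamma')$. For a reversal across a 2-separation with cut pair $\{x,y\}$: given any configuration of $\Gamma$, I would produce a configuration of $\Gamma'$ with the same edge lengths by applying to the part $\Gamma[S]$ the Euclidean isometry that swaps $p(x)$ and $p(y)$ (the reflection through the perpendicular bisecting hyperplane of the segment $p(x)p(y)$). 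This is exactly the picture in Figure~\ref{fig:reverse}: reflecting one side does not change any edge length within $\Gamma[S]$, and after the reflection the incidences at $x$ and $y$ can be interchanged while keeping all lengths fixed. This is available in every dimension $d$, so $M_d(\Gamma)=M_d(\Gamma')$.

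\medskip

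The harder direction is that $d$-measurement isomorphic graphs are 2-isomorphic; by Whitney it suffices to show they are cycle-isomorphic. Here the key is to recover the cycle structure intrinsically from $M_d(\Gamma)$. My plan is to characterize which subsets $S\subset\Edges(\Gamma)$ are cycles purely in terms of the measurement set. The natural guess is to look at the linear-algebraic shape of $M_d(\Gamma)$ near a generic point, or equivalently at the codimension of the constraints it imposes: a subset of edges forms a cycle precisely when it carries exactly one independent relation among squared lengths, so that $M_d(\Gamma)$, projected to the coordinate subspace indexed by $S$, has codimension one while every proper subset projects onto a full-dimensional (unconstrained) set. Concretely, I expect that $S$ is a cycle if and only if $S$ is a minimal edge set whose coordinate projection of $M_d(\Gamma)$ is a proper subset of the corresponding octant, i.e.\ $S$ is a circuit of the graphic matroid, and this circuit structure is detectable from $M_d$ by minimality of nontrivial constraints. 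Since this characterization refers only to $M_d(\Gamma)$ and the coordinate axes, an identification of axes realizing $M_d(\Gamma)=M_d(\Delta)$ automatically sends cycles of $\Gamma$ to cycles of $\Delta$, yielding the edge bijection $\sigma$ required for cycle-isomorphism.

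\medskip

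The main obstacle, and the step I would spend the most care on, is proving that the cycle (circuit) structure really is recoverable from $M_d(\Gamma)$ and that this works uniformly for \emph{all} $d$, including $d=1$. For small $d$ there may be extra algebraic constraints among edge lengths beyond those coming from single cycles—for instance, in $\EE^1$ the realizability conditions are inequality constraints and sign conditions on a line—so I must verify that the \emph{minimal} nontrivial constraints still correspond exactly to cycles and nothing else, and that no spurious coincidences in low dimension merge or split cycle classes. Handling the $d=1$ case carefully, and confirming that the circuit-detection criterion is genuinely independent of $d$ (consistent with the stabilization $M_d\subset M_{d+1}$ noted after the measurement-set definition), is where the argument will need the most attention.
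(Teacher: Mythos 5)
Your proposal is correct and follows essentially the same route as the paper: the easy direction via splits and the reflection trick for reversals, the hard direction by projecting $M_d$ onto coordinate subsets of edges and using the fact that a projection is the full octant exactly when the induced subgraph is a forest (so cycles are detected as minimal constrained edge sets, a mild repackaging of the paper's Lemma~\ref{lem:cir}), and Whitney's theorem to close the loop. Your worry about codimension and the $d=1$ case is a red herring—the criterion you actually state (projection a proper subset of the octant, with minimality) is dimension-independent and is precisely what the paper uses.
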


This also gives us the following:

\begin{corollary}
For any two integers $d_1$ and $d_2$, if a pair of graphs are
$d_1$-measurement isomorphic then they are
$d_2$-measurement isomorphic.
\end{corollary}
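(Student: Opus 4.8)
The corollary is an immediate formal consequence of the main theorem, so the plan is simply to invoke that theorem at both dimensions. Applying it with $d=d_1$ and then with $d=d_2$ gives
\[
\Gamma,\Delta \text{ are } d_1\text{-measurement isomorphic}
\iff \Gamma,\Delta \text{ are 2-isomorphic}
\iff \Gamma,\Delta \text{ are } d_2\text{-measurement isomorphic},
\]
and chaining the two biconditionals yields the claim (in fact as an ``if and only if,'' not merely the stated implication). The point worth emphasizing is that the theorem characterizes $d$-measurement isomorphism by a condition---2-isomorphism---that makes no reference to $d$ whatsoever; the corollary is exactly the observation that this characterization is dimension-free. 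This is mildly surprising, since the sets themselves do depend on $d$: the $M_d(\Gamma)$ are nested and stabilize only at $M_{v-1}(\Gamma)$, so $M_{d_1}(\Gamma)$ and $M_{d_2}(\Gamma)$ are generally different subsets of $\RR^e$, yet the equivalence relation ``coincidence of measurement sets'' they induce on graphs is the same at every $d$.

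Consequently all the content sits in the theorem, and the only real work is to prove that theorem uniformly in $d$. I would reduce to cycle-isomorphism via Whitney's Theorem~\ref{thm:whit}, so that it suffices to show $d$-measurement isomorphism is equivalent to cycle-isomorphism. For the forward direction I would recover the cycle structure of $\Gamma$ directly from $M_d(\Gamma)$. The key observation is that projecting $M_d(\Gamma)$ onto the coordinate subspace indexed by an edge subset $S$ yields exactly $M_d(\Gamma[S])$, since the edges of $S$ constrain only the vertices of $\Gamma[S]$ while the remaining vertices stay free. One then checks that $S$ is a forest if and only if $M_d(\Gamma[S])$ fills the entire nonnegative orthant $\RR^S_{\geq 0}$: a forest admits an embedding with arbitrary prescribed edge lengths in any $\EE^d$, whereas if $S$ contains a cycle then the polygon (triangle) inequality already obstructs some length vectors, forcing the projection to be a proper subset. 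Thus the forests---equivalently the circuits, which in a graphic matroid are precisely the cycles---are detectable from $M_d(\Gamma)$ alone, and the edge identification witnessing $d$-measurement isomorphism must carry cycles to cycles. Establishing this forest-versus-cycle dichotomy uniformly for every $d$ is the main obstacle.

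For the backward direction I would show that each generating move of 2-isomorphism preserves the measurement set in every dimension. A split at a cut vertex factors the configuration space, so $M_d$ is a product of the measurement sets of the two sides both before and after the split and is therefore unchanged. A reversal across a cut pair $\{x,y\}$ is absorbed by applying, to the $S$-side of any configuration $p$, the reflection of $\EE^d$ across the perpendicular bisector of $p(x)$ and $p(y)$: this isometry interchanges $p(x)$ and $p(y)$, preserves every length internal to $S$, and hence realizes the swapped incidences with identical squared edge lengths, giving $M_d(\Gamma)=M_d(\Gamma')$. Since such reflections exist in every $\EE^d$, both moves---and so every step producing a 2-isomorphic graph---act trivially on the measurement set for all $d$ at once. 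It is exactly this uniformity in $d$, present on both sides of the equivalence, that makes the theorem's characterization dimension-free and thereby powers the corollary.
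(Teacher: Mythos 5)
Your proposal is correct and matches the paper exactly: the corollary is stated as an immediate consequence of the main theorem, since $2$-isomorphism makes no reference to $d$, and chaining the biconditional at $d_1$ and $d_2$ gives the claim. Your supporting sketch of the theorem's proof (Whitney's theorem, projection onto edge subsets to detect cycles, and the split/reflection arguments) also follows the paper's own route.
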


\begin{remark}
Testing cycle isomorphism of graphs is as 
computationally difficult
as testing graph isomorphism~\cite{rao}, and thus so is testing
2-isomorphism and d-measurement isomorphism.
\end{remark}

\section{Proof}

We will prove our theorem through a cycle of implications.
For these arguments we first fix $d$ as it turns out that our arguments do not
depend on it.

\subsection{2-isomorphism $\Rightarrow$ d-measurement isomorphism}

The graphs $\Gamma$ and $\Delta$ are 2-isomorphic if they become isomorphic
after a finite number of splits and reversals. 
Clearly,  a split
operation does not change $M_d$. 

Likewise, let 
$\{S,T\}$ be a 2-separation of $\Gamma$ with cut pair $\{x,y\}$ and let 
$\Gamma$ and 
$\Gamma'$ be related by the reversal across this pair.
Under reversal, 
there is a canonical bijection between $\Edges(\Gamma)$ and 
$\Edges(\Gamma')$. 
Let us fix our edge-axis identifications to be
consistent with this bijection.

For any 
$p \in C^d(\Verts(\Gamma))$, we can reflect, in $\EE^d$, the positions of the
vertices of $\Verts(\Gamma[S]) \backslash \{x,y\}$
across the hyperplane bisecting the segment $\overline{xy}$
to obtain a new configuration
$p'$.  Under this construction, 
we have $\ell_\Gamma(p) = \ell_{\Gamma'}(p')$. See Figure~\ref{fig:reverse}.
Thus $M_d(\Gamma)=M_d(\Gamma')$ and they must be d-measurement
isomorphic.

\subsection{d-measurement isomorphism $\Rightarrow$ cycle isomorphism}

We start by showing that a cycle graph 
on $k$ edges 
is not d-measurement isomorphic to any other graph.

\begin{lemma}
\label{lem:cir}
Let $c$
be a cycle of $k$ edges, and $b$ be any other graph with $k$ edges.
Then $c$ and $b$ are not d-measurement isomorphic.
\end{lemma}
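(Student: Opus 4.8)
The plan is to distinguish the cycle $c$ from any other graph $b$ by examining the measurement sets directly, exploiting that a cycle imposes exactly one nontrivial constraint on its edge lengths while structurally different graphs cannot match this. First I would understand $M_d(c)$ explicitly. For a cycle with edges $e_1,\dots,e_k$ and vertices placed in $\EE^d$, the squared lengths $\ell_i = \abs{p_i - p_{i+1}}^2$ can be almost arbitrary positive reals, but they are subject to a closure constraint: the edge vectors $v_i = p_{i+1}-p_i$ must sum to zero around the cycle. The key observation is that in the interior (away from degenerate configurations) the measurement set is full-dimensional---it has dimension $k$ locally---but its boundary behavior and the global topology encode that $b$ must also ``look like'' a cycle.

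Because $M_d$ stabilizes at $d=v-1$ and our arguments are claimed to be dimension-independent, I would work with whatever $d$ is convenient, likely taking $d$ large enough (or $d=1$ for a sharp contrast). The cleanest route is a dimension count combined with a boundary/irreducibility argument. I would compute the dimension of $M_d(c)$: a $k$-cycle has $k$ vertices, so $\dim C^d = dk$, but modding out by the $\binom{d+1}{2}+d$-dimensional group of rigid motions and noting the length-squared map, the image has dimension exactly $k$ (full-dimensional in $\RR^e$), matching the edge count. The crucial feature distinguishing $c$ is that $M_d(c)$ is \emph{not} all of the positive orthant: there is a genuine constraint. For instance, the maximal edge length cannot exceed the sum of the others (a polygon inequality), so $M_d(c)$ omits points where one coordinate is very large relative to the rest.

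The heart of the argument is then to show no other graph $b$ on $k$ edges can reproduce this exact set. I would split into cases by the structure of $b$. If $b$ is a forest (or more generally has $<k$ independent cycles), its measurement set is the full positive orthant (as noted for forests in the introduction), which differs from $M_d(c)$ since the latter has a nontrivial boundary from the polygon inequality. If $b$ contains a cycle $c'$ of length $j < k$ together with extra edges, then along the coordinates indexed by $c'$ there is a polygon-type constraint while the remaining $k-j$ coordinates are free; this product structure of constraints is incompatible with the single global constraint defining $M_d(c)$. The main obstacle will be handling the case where $b$ has the same number of edges and vertices as $c$ but is not itself a cycle---e.g.\ a ``theta'' graph or a cycle with a chord, where multiple overlapping cycle constraints appear. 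Here I would argue that $M_d(b)$ either has lower dimension or has a qualitatively different boundary (multiple facets from several independent polygon inequalities, versus the single inequality structure of a cycle). Distinguishing these boundary stratifications rigorously is the delicate step, and I expect the cleanest formalization is to show that any coordinate permutation matching $M_d(b)$ to $M_d(c)$ would force a bijection of edges preserving the unique cycle, forcing $b$ itself to be a cycle and contradicting the hypothesis.
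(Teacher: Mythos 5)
Your first two cases are fine and match the paper: a forest's measurement set is the whole positive orthant of $\RR^k$, while any graph containing a cycle misses points of that orthant (you invoke a polygon inequality; the paper notes there is no framework in which all but one edge of a cycle has length zero --- the same degenerate instance of that inequality). But the crux of the lemma is the third case, where $b$ is neither a forest nor a cycle, and there your argument has a genuine gap. You propose to distinguish $M_d(b)$ from $M_d(c)$ by ``lower dimension or qualitatively different boundary,'' and you yourself flag the boundary-stratification comparison as the delicate unproved step. The dimension half of that disjunction is false in general: for a theta graph on enough vertices, $M_d(b)$ is full-dimensional in $\RR^k$ just like $M_d(c)$, so dimension cannot separate them. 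The boundary half is never made into an argument --- ``multiple facets from several independent polygon inequalities'' would have to be defined and computed as invariants of the set up to coordinate permutation, and nothing you write rules out a permutation matching the two semialgebraic sets. Your final sentence (that a matching permutation ``would force a bijection of edges preserving the unique cycle'') restates the desired conclusion rather than proving it.

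The paper closes this case with a short projection trick that your write-up is missing: deleting an edge corresponds to projecting the measurement set off the corresponding coordinate axis. Deleting \emph{any} edge of the cycle $c$ yields a forest, so every projection of $M_d(c)$ onto $k-1$ of the axes is the full positive orthant of $\RR^{k-1}$. If $b$ is not a forest and not a cycle, then $b$ has an edge whose deletion still leaves a cycle (take any edge outside some cycle of $b$), so the corresponding projection of $M_d(b)$ is the measurement set of a non-forest and hence, by your own second case, not the full orthant. Since any coordinate identification carrying $M_d(c)$ to $M_d(b)$ carries projections to projections, the two sets cannot agree. I would replace your boundary analysis with this reduction; it reuses only the forest/non-forest dichotomy you have already established and works uniformly in every dimension $d$.
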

\begin{proof}

If a graph
is a forest with $k$ edges, there are no constraints on any of the
achievable squared edge lengths, 
and thus  its measurement set  is the entire first octant of $\RR^k$.

If a graph
with $k$ edges,  is not a forest 
then it must have a cycle as a subgraph. 
In this case,  its measurement set cannot be
the entire first octant of $\RR^k$ since
there is no  framework (in any dimension) where all but one of the edges 
of the cycle
has zero length.

Thus, if $c$ is a cycle and $b$ a forest, their measurement sets 
cannot agree
under any edge-axis identifications.

If $c$ is a cycle and 
$b$ is neither a cycle or a forest, then $b$  must have an edge
whose removal does not turn $b$ into a forest. Meanwhile the removal
of any edge turns $c$ into a forest. In terms of measurement sets,
edge removal corresponds to projecting the measurement set
onto coordinates associated with
the appropriate $k-1$ edges. These projections for $b$ and
$c$ cannot
not agree, as one produces the measurement set of a forest and one
produces the measurement set of a non-forest.
Since the projected measurement sets do not agree,
the original measurement set cannot have agreed
as well.
\end{proof}

Suppose that $\Gamma$ is not cycle isomorphic to $\Delta$. Then 
for any edge bijection, $\sigma$, there must be an edge subset
$c$ of $\Gamma$ such that $\Gamma[c]$
is a cycle, 
while $\Delta[\sigma(c)]$ 
is not a cycle.

Let us fix our edge-axis identifications to be
consistent with $\sigma$.
Next, let us project the measurement sets $M_d(\Gamma)$ and $M_d(\Delta)$  down
to the subspace of $\RR^e$
corresponding to the edge set of $c$ and $\sigma(c)$ respectively.
In the case of $\Gamma$ we will
obtain the measurement set of a cycle, while for $\Delta$ 
we will obtain 
the measurement set of a non-cycle. These two measurement sets
cannot be the same  by Lemma~\ref{lem:cir}.
Thus $M_d(\Gamma)$ 
cannot be the same as 
$M_d(\Delta)$ under this edge-axis identification. 

This is true for all edge-axis identifications consistent with 
the bijection $\sigma$.
And, by assumption, this is true for all bijections. Thus it is true
for all edge-axis identifications and 
$\Gamma$ and $\Delta$ cannot be d-measurement isomorphic.

\subsection{cycle isomorphism $\Rightarrow$ 2-isomorphism}
This is simply Theorem~\ref{thm:whit}. And we are done.

\bibliographystyle{acm}
\bibliography{miso}
\end{document}